\title[]{Coercivity of the Dirichlet-to-Neumann operator and applications to the Muskat problem}
\author{Huy Q. Nguyen}
\address{
Department of Mathematics\\
University of Maryland\\
College Park, MD 20742, USA
}
\email[H. Nguyen]{hnguye90@umd.edu}
\newcommand{\bq}{\begin{equation}}
\newcommand{\eq}{\end{equation}}
\newcommand{\bqa}{\begin{eqnarray*}}
\newcommand{\eqa}{\end{eqnarray*}}
\theoremstyle{plain}
\newtheorem{theo}{Theorem}[section]
\newtheorem{prop}[theo]{Proposition}
\newtheorem{lemm}[theo]{Lemma}
\newtheorem{coro}[theo]{Corollary}
\theoremstyle{definition}
\DeclareMathOperator{\cnx}{div}
\DeclareMathOperator{\Lip}{Lip}
\DeclareSymbolFont{pletters}{OT1}{cmr}{m}{sl}
\DeclareMathSymbol{s}{\mathalpha}{pletters}{`s}
\def\eps{\varepsilon}
\def\na{\nabla}
\def\mez{\frac{1}{2}}
\def\Rr{\mathbb{R}}
\def\T{\mathbb{T}}
\def\Nn{\mathbb{N}}
\def\p{\partial}
\def\na{\nabla}
\def\wt{\widetilde}
\def\Ll{L_{\text{loc}}}
\def\wh{\widehat}
\numberwithin{equation}{section}
\date{today}
\begin{document}
\begin{abstract}
We consider the Dirichlet-to-Neumann operator in strip-like and half-space domains with Lipschitz boundary. It is shown that the  quadratic form generated by the Dirichlet-to-Neumann operator  controls some sharp homogeneous  fractional Sobolev norms. As an application, we prove that the global Lipschitz solutions constructed in \cite{DGN} for the one-phase Muskat problem decays exponentially in time in any H\"older norm $C^\alpha$, $\alpha\in (0, 1)$. 
\end{abstract}
\keywords{Dirichlet-to-Neumann operator, coercivity, Muskat problem, time decay of solutions}
\noindent\thanks{\em{ MSC Classification: 35J25, 35Q35, 35B40.}}

\maketitle
\begin{center}{\em To Professor Duong Minh Duc, with gratitude, respect and admiration}\end{center}
\section{Introduction}
The Dirichlet-to-Neumann operator  arises naturally in free boundary problems in fluid mechanics as a result of dimension reduction. To name a few, the water wave, the Muskat and the Hele-Shaw problem \cite{ABZ3,  AMS, ChlGuiSch, NgPa}.  

Let  $M$ be either the real line $\Rr$ or  the circle $\T$. We consider either the strip-like domain
\bq\label{striplike}
\Omega=\{(x, y) \in M^d\times \Rr: b(x)< y<f(x)\} 
\eq
or the half-space 
\bq\label{halfspace}
\Omega=\{(x, y)\in M^d\times \Rr: y<f(x)\}.
\eq
The boundary functions $f$ and $b$ are Lipschitz continuous and  satisfy
\bq\label{separation} 
\inf_{x\in M^d} f(x)-b(x)\ge h>0. 
\eq 
We also refer to \eqref{striplike} as the finite depth case and to \eqref{halfspace} as the infinite depth case. 

Given a function $g: M^d\to \Rr$, $d\ge 1$, we consider the boundary value problem
\bq\label{sys:elliptic}
\begin{cases}
\Delta_{x, y}\phi=0\quad\text{in } \Omega,\\
\phi(x, f(x))=g(x),\\
\p_\nu\phi(x, b(x))=0, 
\end{cases}\
\eq
where $\nu=\frac{1}{\sqrt{|\na_xb|^2+1}}(\na_x b, -1)$ is the outward unit normal to the bottom boundary $\{y=b(x)\}$. In the infinite depth case, the Neumann condition in \eqref{sys:elliptic} is replaced by the decay condition
\bq
\lim_{(x, y)\to \infty}\na_{x, y}\phi=0.
\eq
The Dirichlet-Neumann operator $G$ associated to $\Omega$ is defined by
\bq
G(g)=(\p_y\phi-\na_xf\cdot \na_x \phi)\vert_{y=f(x)}=(-\na_x f, 1)\cdot \na_{x, y}\phi\vert_{y=f(x)}.
\eq
In other words, $G(g)$ is the normal derivative of the harmonic function $\phi$ on the top boundary $\{y=f(x)\}$.

For the perfect half-space, i.e. $f=0$, we have $G(g)=|D|g$, where $|D|$ is the Fourier multiplier $|\xi|$. In other words, $|D|$ is the square root of the Laplacian $-\Delta_x$. 
The quadratic form generated by $|D|$ is coercive:
\bq\label{C-C:i}
\int_{M^d} g|D|g=\| |D|^\mez g\|_{L^2(M^d)}^2=\| g\|_{\dot H^\mez(M^d)}^2.
\eq
On the other hand, for straight strip domains, i.e. $f=0$ and $b(x)\equiv -a$ with $a>0$, we have $G(g)=|D|\tanh(a|D|)$, whence
\bq\label{C-C:i2}
\int_{M^d} g|D|\tanh(a|D|)g=\| [|D|\tanh(a|D|)]^\mez g\|_{M^d}^2,
\eq
where the right hand-side is equivalent to the seminorm $\dot H^\mez(\T^d)$ when $M^d=\T^d$ and to the seminorm
\bq\label{def:wtH}
\| g\|_{\wt H^\mez(\Rr^d)}^2:=\int_{M^d}\min\{|\xi|, |\xi|^2\}|\wh{g}(\xi)|^2d\xi
\eq
when $M^d=\Rr^d$. The Sobolev type space $\wt H^s$ is studied in detail in \cite{LeoniTice}. We also refer to \cite{CC0, CC, AMS} for pointwise lower bounds for $g|D|g$ and $gG(g)$. 

With applications to free boundary problems in mind, we are interested in generalizing  \eqref{C-C:i} and \eqref{C-C:i2} to non flat boundary, i.e. to domains of the form \eqref{striplike} and \eqref{halfspace} with nontrivial boundary functions $f$ and $g$.

 It is known that when $g$ belongs to the fractional Sobolev space $H^\mez(M^d)$, $G(g)$ is well-defined in $H^{-\mez}(M^d)$. See Proposition \ref{prop:DN} below. We shall prove the following coercive inequalities that generalize \eqref{C-C:i} and \eqref{C-C:i2} to the domain  \eqref{halfspace} and \eqref{striplike}     respectively:
 \bq\label{intro:est1}
 \langle G(g), g\rangle_{H^{-\mez}(M^d), H^\mez(M^d)}\ge M\| g\|_{X}^2,
 \eq
 where $X$ is either $\wt H^\mez$ or $\dot H^\mez$ depending on $M$ and the depth of $\Omega$; the constant $M$  depends explicitly on the boundary of $\Omega$. See Propositions \ref{prop:coercive:finite} and  \ref{prop:coercive:inf} below. 
 
 In Proposition \ref{prop:Phi} we establish the  coercive inequality
 \bq\label{intro:est2}
  \langle G(g), \Phi'(g)\rangle_{H^{-\mez}(M^d), H^\mez(M^d)}\ge M\| \Psi(g)\|^2_{X},
 \eq
 where $\Phi$ is any $C^2$ convex function such that $\Phi'(z)/z$ is continuous, and $\Psi(z)=\int_0^z \sqrt{\Phi''(z')}dz'$. As a consequence, when $M=\T$ and $g$ has zero mean, $  \langle G(g), \Phi'(g)\rangle_{H^{-\mez}(M^d), H^\mez(M^d)}$ controls the $L^p$ norm of $g$. 
 
 In section \ref{section:Muskat}, we apply \eqref{intro:est1} to obtain time decay of the global Lipschitz solutions constructed in \cite{DGN} for the one-phase Muskat problem. It is shown that for any data $f_0\in W^{1, \infty}(\T)$, the global solution $f$ satisfies
 \[
 f\in L^2((0, \infty); \dot H^\mez(\T)),\quad  \p_tf\in L^2((0, \infty); \dot H^{-\mez}(\T)).
 \]
 If $f_0$ has zero mean, we prove that all the H\"older norms $C^\alpha(\T)$, $\alpha \in (0, 1)$ of $f$ decay exponentially.
\section{Coercive inequalities for the Dirichlet-to-Neumann operator}
We denote
\[
\Lip(M^d)=\left\{u: M^d\to \Rr:~\exists C>0,~\forall x, x'\in M^d,~|u(x)-u(x')|\le C|x-x'|\right\}.
\]
We first recall the following proposition on the boundedness of the Dirichlet-to-Neumann operator. 
\begin{prop}[\protect{\cite{ABZ3, NgPa}}]\label{prop:DN}
Let $d\ge 1$.

1. (The finite depth case) Assume that $b$, $f\in \Lip(M^d)$ such that $f-b\in L^\infty(M^d)$ and \eqref{separation} holds. Let $\widetilde{H}^\mez(\Rr^d)$ be the space of $L^2_{\text{loc}}(\Rr^d)$ functions whose Fourier transform are locally $L^2$ in the complement of the origin such that the seminorm \eqref{def:wtH}
is finite. For notational convenience, we set $\widetilde{H}^\mez(\T^d)=\dot H^\mez(\T^d)$. 

For any $g\in \widetilde{H}^\mez(M^d)$, there exists a unique solution $\phi\in \dot H^1(\Omega)$ to \eqref{sys:elliptic} and we have $G(g)\in H^{-\mez}(M^d)$ together with the bound
\bq\label{bound:DNfinite}
\| G(g)\|_{H^{-\mez}(M^d)}\le C(\| \na f\|_{L^\infty(M^d)}+\| \na b\|_{L^\infty(M^d)})\| g\|_{\widetilde{H}^\mez(M^d)},
\eq
where $C=C(h, d)$.

2. (The infinite depth cases) Let $f\in \Lip(M^d)$. For any $g\in \dot H^\mez(M^d)$, there exists a unique solution $\phi\in \dot H^1(\Omega)$ to \eqref{sys:elliptic} and we have $G(g)\in H^{-\mez}(M^d)$ together with the bound
\bq\label{bound:DNinf}
\| G(g)\|_{H^{-\mez}(M^d)}\le C(\| \na f\|_{L^\infty(M^d)})\| g\|_{\dot H^\mez(M^d)},
\eq
where $C=C(d)$. 
\end{prop}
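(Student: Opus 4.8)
The plan is to construct $\phi$ by a standard variational argument and to extract the bound on $G(g)$ from the resulting energy estimate; the one point requiring care is to choose the lifting of the Dirichlet datum sharply enough that the homogeneous norms $\wt H^\mez$ (resp.\ $\dot H^\mez$) appear on the right. First I would flatten $\Omega$. In the finite depth case the map $(x,y)\mapsto(x,\rho)$ with $\rho=\frac{y-f(x)}{f(x)-b(x)}$ is a bi-Lipschitz diffeomorphism of $\Omega$ onto the flat strip $S:=M^d\times(-1,0)$, since $f,b\in\Lip(M^d)$ and $f-b\ge h>0$; in the infinite depth case one uses $(x,y)\mapsto(x,y-f(x))$, mapping $\Omega$ onto $M^d\times(-\infty,0)$. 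Under this change of variables $\Delta_{x,y}$ becomes a divergence form operator $-\cnx(A\na)$ with bounded measurable, uniformly elliptic coefficients, $\lambda|v|^2\le A(x,\rho)v\cdot v\le\Lambda|v|^2$, where $0<\lambda\le\Lambda$ depend only on $h,d,\|\na f\|_{L^\infty}$ and $\|\na b\|_{L^\infty}$; the Dirichlet energy transforms as $\int_\Omega|\na_{x,y}\phi|^2\,dx\,dy=\int_S A\na_{x,\rho}\phi\cdot\na_{x,\rho}\phi\,dx\,d\rho$, and the bottom Neumann condition becomes the conormal condition $A\na\phi\cdot e_\rho=0$ on $\{\rho=-1\}$.

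Next I would build the lifting $\bar g(x,\rho):=e^{\rho|D|}g$, i.e.\ $\wh{\bar g}(\xi,\rho)=e^{\rho|\xi|}\wh g(\xi)$. Using $\int_{-1}^0 e^{2\rho|\xi|}\,d\rho=\frac{1-e^{-2|\xi|}}{2|\xi|}$ and $1-e^{-2|\xi|}\simeq\min\{1,|\xi|\}$, Plancherel gives
\[
\int_S|\na_{x,\rho}\bar g|^2\,dx\,d\rho\simeq\int_{M^d}|\xi|\,(1-e^{-2|\xi|})\,|\wh g(\xi)|^2\,d\xi\simeq\int_{M^d}\min\{|\xi|,|\xi|^2\}\,|\wh g(\xi)|^2\,d\xi=\|g\|_{\wt H^\mez(M^d)}^2,
\]
so $\bar g\in\dot H^1(S)$ with $\|\na_{x,\rho}\bar g\|_{L^2(S)}\simeq\|g\|_{\wt H^\mez(M^d)}$ (on $\T^d$ the zero frequency is absent, which is why $\wt H^\mez(\T^d)=\dot H^\mez(\T^d)$). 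In the infinite depth case $\int_{-\infty}^0 e^{2\rho|\xi|}\,d\rho=\frac1{2|\xi|}$ gives instead $\|\na_{x,\rho}\bar g\|_{L^2}^2\simeq\|g\|_{\dot H^\mez(M^d)}^2$. The same formula will also be applied to test functions below.

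Then I would apply Lax--Milgram. Let $\cV$ denote the Hilbert space of functions in $\dot H^1(S)$ whose trace on $\{\rho=0\}$ vanishes, endowed with the inner product $\int_S A\na u\cdot\na v$, which is bounded and coercive on $\cV$ by uniform ellipticity. Seeking $\phi=\bar g+u$, the variational problem of finding $u\in\cV$ with $\int_S A\na u\cdot\na v=-\int_S A\na\bar g\cdot\na v$ for all $v\in\cV$ has a unique solution obeying $\|\na u\|_{L^2(S)}\lesssim\|\na\bar g\|_{L^2(S)}$, hence $\|\na\phi\|_{L^2(S)}\lesssim\|g\|_{\wt H^\mez(M^d)}$ (resp.\ $\lesssim\|g\|_{\dot H^\mez(M^d)}$); undoing the change of variables yields the harmonic $\phi\in\dot H^1(\Omega)$ solving \eqref{sys:elliptic}, and uniqueness in $\dot H^1(\Omega)$ follows by testing the equation for the difference of two solutions against itself, the boundary terms vanishing by the top Dirichlet and bottom Neumann conditions. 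Finally $G(g)$ is defined by duality: for $\psi\in H^\mez(M^d)$ and any $\Psi\in\dot H^1(S)$ with $\Psi|_{\rho=0}=\psi$, set $\langle G(g),\psi\rangle:=\int_S A\na\phi\cdot\na\Psi$; this is independent of the choice of $\Psi$ because two such extensions differ by an element of $\cV$, which the weak formulation annihilates, and taking $\Psi=e^{\rho|D|}\psi$ yields
\[
|\langle G(g),\psi\rangle|\le\|\na\phi\|_{L^2(S)}\,\|\na\Psi\|_{L^2(S)}\lesssim\|g\|_{\wt H^\mez(M^d)}\,\|\psi\|_{H^\mez(M^d)},
\]
which is \eqref{bound:DNfinite}; the infinite depth case is identical and gives \eqref{bound:DNinf}.

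The main obstacle is bookkeeping rather than a single hard estimate: one must set up the homogeneous spaces $\dot H^1(\Omega)$, $\wt H^\mez$ and $\dot H^\mez$ carefully in the whole-line/low-dimensional regime, where they are quotients modulo constants and the lifting $e^{\rho|D|}g$ is only meaningful through its gradient, and one must track that $\lambda,\Lambda$ — hence all the implicit constants — depend only on $h$, $d$ and the Lipschitz seminorms of $f$ and $b$, since that is precisely the dependence claimed in \eqref{bound:DNfinite}--\eqref{bound:DNinf}. A secondary point is to verify that the trace map $\dot H^1(\Omega)\to\wt H^\mez(M^d)$ (resp.\ $\dot H^\mez(M^d)$) is well defined and bounded, so that $\cV$ is closed and the duality definition of $G(g)$ is legitimate.
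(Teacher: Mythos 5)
The paper does not actually prove Proposition \ref{prop:DN}; it is recalled from \cite{ABZ3, NgPa}, so there is no in-paper argument to compare against. Your variational construction --- flatten the domain, lift the Dirichlet datum by the Poisson-type extension $e^{\rho|D|}g$ (whose Dirichlet energy on the strip is exactly comparable to $\|g\|_{\wt H^{\mez}}^2$ via $\int_{-1}^0 e^{2\rho|\xi|}d\rho=\frac{1-e^{-2|\xi|}}{2|\xi|}$), solve for the correction by Lax--Milgram in the closed subspace of $\dot H^1$ with vanishing top trace, and define $G(g)$ by duality through the weak conormal derivative --- is precisely the standard proof given in those references, and the structure of your argument is sound: the well-definedness of the duality pairing, the uniqueness argument, and the final estimate $|\langle G(g),\psi\rangle|\le \Lambda\|\na\phi\|_{L^2}\|\na\Psi\|_{L^2}$ all go through. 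It is also consistent with the identity \eqref{Stokes}--\eqref{DN:videntity} that the paper uses later.

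The one point where your write-up does not deliver the statement as claimed is the constant in the finite depth case. With the global flattening $\rho=\frac{y-f}{f-b}$, the coefficient matrix has entries of size $f-b$ and $\frac{1}{f-b}$ (the volume element is $(f-b)\,dx\,d\rho$), so both ellipticity constants $\lambda,\Lambda$ --- contrary to what you assert --- depend on $\|f-b\|_{L^\infty}$ and not only on $h$, $d$, $\|\na f\|_{L^\infty}$, $\|\na b\|_{L^\infty}$; the resulting bound is the one with the constant of Proposition \ref{prop:coercive:finite}, involving $\|f-b\|_{W^{1,\infty}}$, rather than the $C(h,d)$ of \eqref{bound:DNfinite}. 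To obtain the stated dependence one should localize: take the lifting $\bar g$ and the test extensions $\Psi$ supported in a collar $\{f(x)-h<y<f(x)\}$ of width $h$ below the top boundary (e.g.\ flatten only that collar by $(x,z)\mapsto(x,hz+f(x))$ and multiply by a cutoff in $z$), so that all Jacobians are controlled by $h$ and the Lipschitz seminorms alone; the energy inequality $\|\na\phi\|_{L^2(\Omega)}\le\|\na\bar g\|_{L^2(\Omega)}$ itself is flattening-free, so only the lifting and the dual extension need this care. Aside from this quantitative bookkeeping, and the deferred (but standard) verification that the trace $\dot H^1(S)\to\wt H^{\mez}(M^d)$ is bounded so that your space $\mathcal{V}$ is closed, the proof is correct.
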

Coercive inequalities for $\langle G(g), g\rangle_{H^{-\mez}(M^d), H^\mez(M^d)}$ are established in Propositions \ref{prop:coercive:finite} and  \ref{prop:coercive:inf} for the finite and infinite depth cases respectively.
\begin{prop}\label{prop:coercive:finite} Let $\Omega$ be the strip-like domain \eqref{striplike}, where  $b$, $f\in \Lip(M^d)$ such that $f-b\in L^\infty(M^d)$ and \eqref{separation} holds.  There exists a constant $C=C(d)>0$ such that for any $g\in H^\mez(M^d)$,  we have
\bq\label{coercive:finite}
\langle G(g), g\rangle_{H^{-\mez}(M^d), H^\mez(M^d)}\ge \frac{Ch}{1+\|\na f\|_{L^\infty}^2+\|f-b\|_{W^{1, \infty}}^2}\| g\|_{\widetilde H^\mez(M^d)}^2,
\eq
where $h$ is given by \eqref{separation}. 
\end{prop}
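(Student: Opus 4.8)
The plan is to start from the variational characterization of $\phi$ and $G(g)$. The key identity is that for the harmonic extension $\phi\in\dot H^1(\Omega)$ solving \eqref{sys:elliptic}, the pairing $\langle G(g),g\rangle$ equals the Dirichlet energy,
\[
\langle G(g),g\rangle_{H^{-\mez},H^\mez}=\int_\Omega |\na_{x,y}\phi|^2\,dx\,dy.
\]
This follows by integrating by parts, using $\Delta_{x,y}\phi=0$, the trace $\phi|_{y=f}=g$, and the homogeneous Neumann condition on the bottom (or the decay at infinity). So the task reduces to bounding $\|g\|_{\wt H^\mez(M^d)}^2$ from below by a constant times the Dirichlet energy of \emph{any} admissible extension of $g$ to $\Omega$ — and in particular $\phi$ itself, being the minimizer, has the least energy, so it suffices to produce one good competitor or, dually, to use the minimizing property directly.

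The cleanest route is flattening: introduce the change of variables that straightens $\Omega$ to a straight strip (or half-space) $S=\{(x,z): -1<z<0\}$ via something like $y=f(x)+z\,(f(x)-b(x))$ in the finite-depth case, so $z=0\leftrightarrow y=f$ and $z=-1\leftrightarrow y=b$. Under this map the Dirichlet integral $\int_\Omega|\na_{x,y}\phi|^2$ transforms into $\int_S A\na_{x,z}\psi\cdot\na_{x,z}\psi$ for $\psi(x,z)=\phi(x,y(x,z))$, where $A$ is a symmetric matrix built from $\na f$, $\na b$ and $f-b$; the separation condition \eqref{separation} guarantees $A$ is uniformly elliptic with ellipticity constant controlled from below by $h/(1+\|\na f\|_{L^\infty}^2+\|f-b\|_{W^{1,\infty}}^2)$ (one checks the worst eigenvalue scenario directly — this is where the stated constant comes from). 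Hence
\[
\int_\Omega|\na_{x,y}\phi|^2\,dx\,dy\ \ge\ \frac{ch}{1+\|\na f\|_{L^\infty}^2+\|f-b\|_{W^{1,\infty}}^2}\int_S|\na_{x,z}\psi|^2\,dx\,dz,
\]
and $\psi$ has trace $g$ at $z=0$. It then remains to prove the flat, constant-coefficient inequality: the Dirichlet energy over the straight strip $S$ of any extension of $g$ controls $\|g\|_{\wt H^\mez}^2$. That is exactly the statement that the Dirichlet-to-Neumann operator of the straight strip is $|D|\tanh(|D|)$ and generates a quadratic form equivalent to $\|\cdot\|_{\wt H^\mez}^2$ (respectively $\|\cdot\|_{\dot H^\mez}^2$ on $\T^d$), which is \eqref{C-C:i2}; one proves it by Fourier transform in $x$, solving the resulting ODE in $z$ with a Neumann condition at $z=-1$, and minimizing the energy mode by mode to get $\min\{|\xi|,|\xi|^2\}|\wh g(\xi)|^2$ up to absolute constants.

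A mild technical point: to justify the change of variables and the energy identity one should first establish everything for smooth $g$ (and correspondingly smooth, or smoothed, boundary functions), then pass to the limit using Proposition \ref{prop:DN}, which guarantees $\phi\in\dot H^1(\Omega)$ depends continuously on $g\in\wt H^\mez$ and that $G(g)\in H^{-\mez}$. Alternatively, and perhaps more robustly, one avoids the energy-identity step entirely by working with the bilinear form directly: for $g\in H^\mez$, $\langle G(g),g\rangle$ is defined via $\int_\Omega\na_{x,y}\phi\cdot\na_{x,y}\Phi$ for any $\Phi\in\dot H^1(\Omega)$ with trace $g$, so choosing $\Phi=\phi$ gives the energy, and the flattening argument applies verbatim at the level of the form.

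The main obstacle I expect is twofold. First, getting the ellipticity constant of the pulled-back matrix $A$ with the \emph{precise} dependence $h/(1+\|\na f\|_{L^\infty}^2+\|f-b\|_{W^{1,\infty}}^2)$ stated in \eqref{coercive:finite}, rather than a cruder bound — this requires a careful (but elementary) computation of the smallest eigenvalue of $A$, exploiting that the $z$-direction entry of $A$ carries a favorable factor of $(f-b)$ from the change of variables which, combined with \eqref{separation}, prevents degeneration. Second, in the torus-versus-line dichotomy and the finite-versus-infinite depth dichotomy, correctly identifying that the flat energy controls $\wt H^\mez$ on $\Rr^d$ but the full $\dot H^\mez$ on $\T^d$ (the low-frequency behavior differs: on $\T^d$ the spectrum of $|D|$ is bounded away from $0$ on the mean-zero sector, so $\tanh(|D|)\sim 1$ there, whereas on $\Rr^d$ the factor $\tanh(|\xi|)\sim|\xi|$ near $\xi=0$ degrades $|\xi|$ to $|\xi|^2$) — this is exactly the content encoded in the definition \eqref{def:wtH} and the convention $\wt H^\mez(\T^d)=\dot H^\mez(\T^d)$, so the bookkeeping must be done consistently.
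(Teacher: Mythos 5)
Your proposal is correct and follows essentially the same route as the paper: flatten the strip, identify $\langle G(g),g\rangle$ with the pulled-back Dirichlet energy $\int \mathcal{A}\nabla_{x,z}v\cdot\nabla_{x,z}v$, use the uniform ellipticity of $\mathcal{A}$ (which the paper realizes by completing the square, yielding exactly the constant $h/(1+\|\na f\|_{L^\infty}^2+\|f-b\|_{W^{1,\infty}}^2)$), and then invoke a flat-strip trace inequality bounding $\|g\|_{\wt H^{1/2}}^2$ by the flat Dirichlet energy. The only cosmetic difference is that the paper proves the trace step directly by writing $|\wh g(\xi)|^2$ via the fundamental theorem of calculus in $z$ with a cutoff and applying Cauchy--Schwarz, rather than by mode-by-mode energy minimization giving the $|\xi|\tanh|\xi|$ weight; both are standard and equivalent.
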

\begin{proof}
We flatten $\Omega$ using the Lipschitz diffeomorphism 
\[
M^d\times (-1, 0)\ni (x, z)\mapsto \mathcal{S}(x, z)=(x, \varrho(x, z))\in \Omega,
\]
where
\[
\varrho(x, z)=(z+1)f(x)-zb(x)
\]
satisfies $\p_z\varrho(x, z)=f(x)-b(x)\ge h$ and $\na_{x, z}\varrho\in L^\infty(M^d\times (-1, 0))$. By the chain rule, the function $v=\phi\circ \mathcal{S} $  satisfies 
\bq\label{div:eq}
\cnx_{x, z}(\mathcal{A}\na_{x,z}v)(x, z)=\p_z\varrho (\Delta_{x, y}\phi)(\mathcal{S}(x, z))=0,
\eq
where 
\bq\label{def:matrixA}
\mathcal{A}=
\begin{bmatrix}
\p_z\varrho\mathbb{I}_{d\times d} & -\na_x\varrho\\
-(\na_x\varrho )^T& \frac{1+|\na_x\varrho|^2}{\p_z\varrho}
\end{bmatrix}.
\eq
Here we regard the gradient as a column matrix.  In terms of $v$ we have 
\bq\label{DN:v}
G(g)(x)=-\na_x \varrho(x, 0)\cdot \na_xv(x, 0)+\frac{1+|\na_x \varrho(x, 0)|^2}{\p_z\varrho(x, 0)}\p_zv(x, 0)=e_{d+1}\cdot (\mathcal{A}\na_{x, z}v)(x, 0). 
\eq
We recall the following Stokes formula: 
\bq\label{Stokes}
\begin{aligned}
&\langle e_{d+1}\cdot u (\cdot, 0), w(\cdot, 0)\rangle_{H^{-\mez}(M^d), H^\mez(M^d)} -\langle e_{d+1}\cdot u (\cdot, -a), w(\cdot, -a)\rangle_{H^{-\mez}(M^d), H^\mez(M^d)} \\
&\quad=(u, \na_{x, z} w)_{L^2(M^d\times (-a, 0))}+(\cnx_{x, z}  u, w)_{L^2(M^d\times (-a, 0))},\quad a>0,
\end{aligned}
\eq
provided that $u\in L^2(M^d\times (-1, 0))^{d+1}$, $\cnx_{x, z} u\in  L^2(M^d\times (-1, 0))$ and $w\in H^1(M^d\times (-1, 0))$.

We check that  \eqref{Stokes} is applicable with $u=\mathcal{A}\na_{x, z}v$ and $w=v$. Indeed, since $\na_{x, y}\phi\in L^2(\Omega)$ (by Proposition \ref{prop:DN}) and $\na_{x, z}\varrho\in L^\infty(M^d\times (-1, 0))$, we have $\na_{x, z}v\in L^2(M^d\times (-1, 0))$, and thus  $\mathcal{A}\na_{x, z}v\in  L^2(M^d\times (-1, 0))$. In addition, since $v(\cdot, 0)=g(\cdot)\in L^2(M^d)$ and $\Omega$ has finite depth, it follows that $v\in L^2(M^d\times (-1, 0))$. By the chain rule, we have
\bq\label{Neumann:v}
\begin{aligned}
e_{d+1}\cdot (\mathcal{A}\na_{x, z}v)\vert_{z=-1}&=-\na_x\varrho\cdot \na_xv+\frac{1+|\na_x\varrho|^2}{\p_z\varrho}\p_zv\vert_{z=-1}\\
&=-\na_x\varrho\cdot \na_x\phi+\p_y\phi\vert_{z=-1}\\
&=-\na_xb\cdot \na_x\phi+\p_y\phi\vert_{z=-1}\\
&=-\sqrt{1+|\na_x b|^2}\p_\nu \phi(x, b(x))=0.
\end{aligned}
\eq
Then applying \eqref{Stokes} and invoking \eqref{div:eq}, \eqref{DN:v} and \eqref{Neumann:v}, we deduce 
\bq\label{DN:videntity}
\begin{aligned}
\langle G(g), g\rangle_{H^{-\mez}, H^\mez}&=\int_{-1}^0\int_{M^d} \mathcal{A}\na_{x, z}v\cdot \na_{x,z }vdxdz\\
&=\int_{-1}^0\int_{M^d}\p_z\varrho \left\{|\na_xv|^2-2\frac{\na_x\varrho}{\p_z\varrho}\cdot \na_xv\p_zv+\frac{1+|\na_x\varrho|^2}{|\p_z\varrho|^2}|\p_zv|^2 \right\}dxdz\\
&=\int_{-1}^0\int_{M^d}\p_z\varrho \left\{\left|\na_xv-\frac{\na_x\varrho}{\p_z\varrho}\p_zv\right|^2+\frac{|\p_zv|^2 }{|\p_z\varrho|^2}\right\}dxdz.
\end{aligned}
\eq
In the remainder of this proof, we only treat the more difficult case $M^d=\Rr^d$. Let $\chi: \Rr\to \Rr$ be a smooth function that is identically $1$ on $(-1/3, \infty)$ and vanishes on $(-\infty, -2/3)$. Then $w(x, z):=\chi(z)v(x, z)$ satisfies $w(x, 0)=g(x)$ and $w$ vanishes near $z=-1$. Consequently,
\begin{align*}
|\wh{g}(\xi)|^2=|\wh{w}(\xi, 0)|^2&=\Re\int_{-1}^0\p_z\wh{w}(\xi, z)\overline{\wh{w}(\xi, z)}dz\\
&=\Re\int_{-1}^0\left[\chi'(z)\wh{v}(\xi, z)+\chi(z)\p_z\wh{v}(\xi, z)\right]\chi(z)\overline{\wh{v}(\xi, z)}dz,
\end{align*}
where $\wh{w}$ is the Fourier transform of $w$ with respect to $x\in \Rr^d$.  It follows that
\begin{align*}
\int_{\Rr^d}\min\{|\xi|, |\xi|^2\}|\wh{g}(\xi)|^2
&\le C\int_{-1}^0\int_{\Rr^d}|\xi|^2|\wh{v}(\xi, z)|^2+|\p_z\wh{v}(\xi, z)||\xi||\wh{v}(\xi, z)|d\xi dz\\
&\le C\int_{-1}^0\int_{\Rr^d}|\wh{\na_xv}(\xi, z)|^2+|\p_z\wh{v}(\xi, z)||\wh{\na_xv}(\xi, z)|d\xi dz\\
&\le C\| \na_xv\|_{L^2(\Rr^d\times (-1, 0))}^2+C\| \na_xv\|_{L^2(\Rr^d\times (-1, 0))}\| \p_zv\|_{L^2(\Rr^d\times (-1, 0))}\\
&\le C\int_{-1}^0\int_{\Rr^d} |\na_x v|^2+|\p_zv|^2dxdz.
\end{align*}
It follows from this and the triangle inequality
\[
|\na_x v|\le |\na_x v-\frac{\na_x\varrho}{\p_z\varrho}\p_zv|+\frac{|\na_x\varrho|}{\p_z\varrho}|\p_zv|
\]
that
\[
\begin{aligned}
\int_{\Rr^d}\min\{|\xi|, |\xi|^2\}||\wh{g}(\xi)|^2&\le C\int_{-1}^0\int_{\Rr^d}|\na_x v-\frac{\na_x\varrho}{\p_z\varrho}\p_zv|^2+\big(|\p_z\varrho|^2+|\na_x\varrho|^2\big)\frac{|\p_zv|^2}{|\p_z\varrho|^2}dxdz\\
&\le C\int_{-1}^0\int_{\Rr^d}\p_z\varrho \left\{\left|\na_xv-\frac{\na_x\varrho}{\p_z\varrho}\p_zv\right|^2+\frac{|\p_zv|^2 }{|\p_z\varrho|^2}\right\}\frac{1+|\p_z\varrho|^2+|\na_x\varrho|^2}{\p_z\varrho}dxdz.
\end{aligned}
\]
Using  
\[
h\le \p_z\varrho=f(x)-b(x)\le \| f-b\|_{L^\infty}\quad\text{and}\quad \| \na_x\varrho\|_{L^\infty}\le \| \na f\|_{L^\infty}+\| \na(f-b)\|_{L^\infty},
\]
we deduce
\bq\label{bound:trace}
\begin{aligned}
&\int_{\Rr^d}\min\{|\xi|, |\xi|^2\}|\wh{g}(\xi)|^2\\
&\quad\le C\frac{1+\|\na f\|_{L^\infty}^2+\|f-b\|_{W^{1, \infty}}^2}{h}\int_{-1}^0\int_{\Rr^d}\p_z\varrho \left\{\left|\na_xv-\frac{\na_x\varrho}{\p_z\varrho}\p_zv\right|^2+\frac{|\p_zv|^2 }{|\p_z\varrho|^2}\right\}dxdz.
\end{aligned}
\eq
In view of \eqref{DN:videntity} and \eqref{bound:trace}  we conclude the proof of \eqref{coercive:finite}.
\end{proof} 
\begin{prop}\label{prop:coercive:inf}
Let $\Omega$ be the half-space domain \eqref{halfspace} with $f\in \Lip(M^d)$. There exists a constant $C=C(d)>0$ such that for any  $g\in H^\mez(M^d)$, we have
\bq\label{coercive:inf}
\langle G(g), g\rangle_{H^{-\mez}(M^d), H^\mez(M^d)}\ge \frac{C}{1+\| \na f\|_{L^\infty(M^d)}}\|g\|_{\dot H^\mez(M^d)}^2. 
\eq
\end{prop}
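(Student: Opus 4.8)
The plan is to adapt the proof of Proposition~\ref{prop:coercive:finite}, the crucial change being that the flattening diffeomorphism must dilate the vertical variable by a factor comparable to the slope of $f$; this is exactly what turns an a priori quadratic dependence on $\|\na f\|_{L^\infty}$ into the linear one stated in \eqref{coercive:inf}. Fix $\ld:=1+\|\na f\|_{L^\infty(M^d)}\ge 1$ and flatten $\Omega$ by
\[
M^d\times(-\infty,0)\ni(x,z)\longmapsto \mathcal S(x,z)=(x,\varrho(x,z))\in\Omega,\qquad \varrho(x,z):=\ld z+f(x),
\]
so that $\p_z\varrho\equiv\ld$ and $\na_x\varrho=\na_x f$. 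Writing $v:=\phi\circ\mathcal S$, the chain rule gives $\cnx_{x,z}(\mathcal A\na_{x,z}v)=0$ and $G(g)=e_{d+1}\cdot(\mathcal A\na_{x,z}v)|_{z=0}$ with $\mathcal A$ the matrix \eqref{def:matrixA} corresponding to this $\varrho$, and, completing the square as in \eqref{DN:videntity},
\[
\langle G(g),g\rangle_{H^{-\mez},H^\mez}=\int_{-\infty}^0\int_{M^d}\Big(\ld\,\big|\na_x v-\tfrac{\na_x f}{\ld}\p_z v\big|^2+\tfrac1\ld|\p_z v|^2\Big)\,dx\,dz=:E.
\]
As in the finite-depth case this follows from \eqref{Stokes}, applied on $M^d\times(-a,0)$ (legitimate since $g\in H^\mez\subset L^2$ forces $v\in L^2(M^d\times(-a,0))$): it gives $\langle G(g),g\rangle=\int_{-a}^0\int_{M^d}\mathcal A\na_{x,z}v\cdot\na_{x,z}v+\int_{\Omega_a}|\na_{x,y}\phi|^2$ with $\Omega_a:=\{y<-\ld a+f(x)\}$, and letting $a\to\infty$ the last term vanishes because $\na_{x,y}\phi\in L^2(\Omega)$ while $|\Omega_a|\downarrow 0$ --- equivalently, $E=\int_\Omega|\na_{x,y}\phi|^2=\langle G(g),g\rangle$ by the variational characterization of $\phi$.

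As in Proposition~\ref{prop:coercive:finite} I only treat $M^d=\Rr^d$, and next prove the trace inequality
\[
\|g\|_{\dot H^\mez(\Rr^d)}^2\le C\int_{-\infty}^0\int_{\Rr^d}\big(|\na_x v|^2+|\p_z v|^2\big)\,dx\,dz,\qquad C=C(d).
\]
The fixed cutoff at $z=-1$ used in the finite-depth proof no longer suffices: because the weight $\min\{|\xi|,|\xi|^2\}$ of $\wt H^\mez$ is replaced by $|\xi|$, the low frequencies require a cutoff at the natural scale $1/|\xi|$. Pick $\chi\in C^\infty(\Rr)$ equal to $1$ on $[-1/3,\infty)$ and $0$ on $(-\infty,-2/3)$ and set $\wh w(\xi,z):=\chi(|\xi|z)\wh v(\xi,z)$; then $\wh w(\xi,\cdot)$ is supported in $[-2/(3|\xi|),0]$ and equals $\wh g(\xi)$ at $z=0$, so $|\wh g(\xi)|^2=2\RE\int_{-\infty}^0\overline{\wh w}\,\p_z\wh w\,dz$. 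Expanding $\p_z\wh w=|\xi|\chi'(|\xi|z)\wh v+\chi(|\xi|z)\p_z\wh v$, multiplying by $|\xi|$, and using $|\xi||\wh v|=|\wh{\na_x v}|$ together with Young's inequality, one obtains
\[
|\xi|\,|\wh g(\xi)|^2\le C\int_{-\infty}^0\mathbf 1_{[-2/(3|\xi|),0]}(z)\big(|\wh{\na_x v}(\xi,z)|^2+|\p_z\wh v(\xi,z)|^2\big)\,dz,
\]
and integrating in $\xi$, dropping the indicator, and using Plancherel gives the trace bound.

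To conclude, write $\na_x v=\big(\na_x v-\tfrac{\na_x f}{\ld}\p_z v\big)+\tfrac{\na_x f}{\ld}\p_z v$ and note $|\na_x f|/\ld\le\|\na f\|_{L^\infty}/(1+\|\na f\|_{L^\infty})\le 1$, so $|\na_x v|^2\le 2\big|\na_x v-\tfrac{\na_x f}{\ld}\p_z v\big|^2+2|\p_z v|^2$. Hence, bounding $\int\ld|\na_x v-\tfrac{\na_x f}{\ld}\p_z v|^2\le E$ and $\int\tfrac1\ld|\p_z v|^2\le E$ and using $\ld\ge1$,
\[
\int_{-\infty}^0\int_{\Rr^d}\big(|\na_x v|^2+|\p_z v|^2\big)\,dx\,dz\le\frac2\ld E+3\ld E\le 5\ld E,
\]
and combining with the trace bound, $\|g\|_{\dot H^\mez(\Rr^d)}^2\le 5C\ld E=5C(1+\|\na f\|_{L^\infty})\langle G(g),g\rangle$, which is \eqref{coercive:inf}. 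The periodic case $M^d=\T^d$ is entirely analogous, working with Fourier series and reducing to zero-mean $g$ (which affects neither side of \eqref{coercive:inf}); it is in fact simpler since $|k|\ge1$ for $k\neq0$.

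The genuine obstacle is the one addressed by the vertical dilation: with the naive choice $\ld=1$ one is stuck with $|\na_x v|^2\le 2|\na_x v-\na_x f\,\p_z v|^2+2\|\na f\|_{L^\infty}^2|\p_z v|^2$, the factor $\|\na f\|_{L^\infty}^2$ is sharp on $\int|\p_z v|^2\le E$, and one only recovers a constant $\sim(1+\|\na f\|_{L^\infty}^2)^{-1}$ --- precisely what happens in Proposition~\ref{prop:coercive:finite}. Choosing $\ld\sim1+\|\na f\|_{L^\infty}$ rebalances the anisotropy of the shear $\mathcal S$: the weight of $\int|\p_z v|^2$ in $E$ drops by a factor $1/\ld$ while $|\na_x f|/\ld$ stays bounded, so the wasteful contribution is damped by $\ld^{-2}\cdot\ld=\ld^{-1}$. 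The only remaining novelty over the finite-depth argument --- justifying the limit $a\to\infty$ in \eqref{Stokes} when $v$ is not square-integrable on $\Omega$ --- is routine and can in any case be bypassed by using the variational identity $\langle G(g),g\rangle=\int_\Omega|\na_{x,y}\phi|^2$ directly.
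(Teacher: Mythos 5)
Your proof is correct, but it reaches the linear dependence on $\|\na f\|_{L^\infty}$ by a genuinely different mechanism than the paper. The paper keeps the isotropic flattening $\varrho(x,z)=z+f(x)$, so that $\langle G(g),g\rangle=\int\int|\na_xv-\na f\,\p_zv|^2+|\p_zv|^2$, and then extracts the $\dot H^{\mez}$ norm through the Riesz-transform identity $\int|\xi||\wh g|^2=2\int\int \p_zv\,\mathcal R\cdot\na_xv$: writing $\na_xv=(\na_xv-\na f\p_zv)+\na f\p_zv$, the trace quantity becomes \emph{linear} in $\na_xv$, so Cauchy--Schwarz produces only one power of $\|\na f\|_{L^\infty}$ (from the cross term $2\p_zv\,\mathcal R\cdot(\na f\p_zv)$). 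You instead keep a standard trace inequality $\|g\|_{\dot H^\mez}^2\le C\int\int|\na_xv|^2+|\p_zv|^2$ (proved with the frequency-adapted cutoff $\chi(|\xi|z)$, which is fine and has a constant depending only on $d$) and put the cleverness into the anisotropic flattening $\varrho=\ld z+f(x)$ with $\ld=1+\|\na f\|_{L^\infty}$, which rebalances the quadratic form so that $|\na_x\varrho|/\p_z\varrho\le 1$; your bookkeeping $\int|\na_xv|^2+|\p_zv|^2\le 5\ld E$ is correct and yields exactly \eqref{coercive:inf}. Your approach is arguably more elementary (no Riesz transform, no $L^2$-boundedness of singular integrals) and the diagnosis of why the naive $\ld=1$ argument only gives $(1+\|\na f\|_{L^\infty}^2)^{-1}$ is accurate; the paper's approach avoids tampering with the diffeomorphism and exploits a structure special to the weight $|\xi|$. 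One step you should tighten: identifying the boundary term at $z=-a$ in \eqref{Stokes} with $\int_{\Omega_a}|\na_{x,y}\phi|^2$ is itself a Green identity on an unbounded domain, so as written that justification is circular. This is not a real gap --- your fallback via the variational identity $\langle G(g),g\rangle=\int_\Omega|\na_{x,y}\phi|^2$ (equivalently, the paper's own truncation argument \eqref{dual:inf}--\eqref{zerolimit} with $\chi(z/(-n))$, which transfers verbatim to your $\varrho$) settles it --- but the limit $a\to\infty$ deserves the cutoff argument rather than the phrase ``routine.''
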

\begin{proof}
We flatten $\Omega=\{(x, y)\in M^d\times \Rr: y<f(x)\}$ using the Lipschitz diffeomorphism 
\[
M^d\times  (-\infty, 0)\ni (x, z)\mapsto \mathcal{S}(x, z)=(x, \varrho(x, z))\in \Omega,
\]
where $\varrho(x, z)=z+f(x)$. The formula \eqref{DN:v} holds with $v=\phi\circ \mathcal{S}$. Let $\chi:\Rr\to \Rr$ be a smooth function satisfying $\chi(z)=1$ on $(-1/3, \infty)$ and $\chi(z)=0$ on $(-\infty, -2/3)$. We apply the Stokes formula \eqref{Stokes}  with $u=\mathcal{A}\na_{x, z}v$, $w=v(x, z)\chi(\frac{z}{-n})$ and $a=n$ to have
\bq\label{dual:inf}
\langle G(g), g\rangle_{H^{-\mez}, H^\mez}=\int_{M^d} \int_{-n}^0\mathcal{A}\na_{x, z}v\cdot \na_{x,z }v-\frac{1}{n}\big(e_{d+1}\cdot\mathcal{A}\na_{x, z}v\big)\chi'(\frac{z}{-n}) vdzdx.
\eq
We shall prove that 
\bq\label{zerolimit}
I:=\lim_{n\to \infty}\frac{1}{n}\int_{M^d} \int_{-n}^0 \big(e_{d+1}\cdot\mathcal{A}\na_{x, z}v\big)\chi'(\frac{z}{-n}) vdzdx=0.
\eq
Since $v(x, 0)=g(x)$, we have
\[
|v(x, z)|\le |g(x)|+|z|^\mez\left|\int_{z}^0|\p_zv(x, z')|^2dz'\right|^\mez\le |g(x)|+n^\mez\left|\int_{-n}^0|\p_zv(x, z')|^2dz'\right|^\mez,\quad z\in [-n, 0],
\]
whence
\begin{align*}
I&\le \frac{1}{n}\int_{M^d} \int_{-n}^0 \big|e_{d+1}\cdot\mathcal{A}\na_{x, z}v\big||\chi'(\frac{z}{-n})||g(x)|dzdx\\
&\quad+ \frac{1}{\sqrt{n}}\int_{M^d} \int_{-n}^0 \big|e_{d+1}\cdot\mathcal{A}\na_{x, z}v\big||\chi'(\frac{z}{-n})|\left|\int_{-n}^0|\p_zv(x, z')|^2dz'\right|^\mez dzdx:=I_1+I_2.
\end{align*}
By H\"older's inequality,
\begin{align*}
I_1&\le \frac{1}{n}\|\mathcal{A}\na_{x, z}v\|_{L^2(M^d\times (-n, 0))}\|g\chi'(\frac{\cdot}{-n})\|_{L^2(M^d\times (-n, 0))}\\
&\le \frac{C}{n^\mez}\|\mathcal{A}\na_{x, z}v\|_{L^2(M^d\times (-n, 0))}\| g\|_{L^2(M^d)}\to 0\quad\text{as } n\to \infty
\end{align*}
and
\begin{align*}
I_2&\le  \frac{1}{\sqrt{n}} \|\mathcal{A}\na_{x, z}v\chi'(\frac{\cdot}{-n})\|_{L^2(M^d\times (-n, 0))}\| \int_{-n}^0\left|\p_zv(\cdot, z')|^2dz'\right|^\mez\|_{L^2(M^d\times (-n, 0))} \\
&\le  \|\mathcal{A}\na_{x, z}v\chi'(\frac{\cdot}{-n})\|_{L^2(M^d\times (-n, 0))}\|\p_zv\|_{L^2(M^d\times (-n, 0))}.
 \end{align*}
 Since $\chi'(\frac{z}{-n})\to 0$ as $n\to \infty$ and $\mathcal{A}\na_{x, z}v \in L^2(M^d\times (-\infty, 0))$, the dominated convergence theorem implies that $\lim_{n\to \infty} I_2=0$. Therefore, passing $n\to \infty$ in \eqref{dual:inf} we obtain
 \bq\label{DN:videntity:inf}
 \begin{aligned}
\langle G(g), g\rangle_{H^{-\mez}, H^\mez}&=\int_{M^d} \int_{-\infty}^0\mathcal{A}\na_{x, z}v\cdot \na_{x,z }vdzdx\\
&=\int_{M^d}\int_{-\infty}^0\left|\na_xv-\na f\p_zv\right|^2+|\p_zv|^2 dzdx,
\end{aligned}
\eq
where we have used that $\na_x\varrho=\na_xf$ and $\p_z\varrho=1$. 

We only consider the more difficult case $M^d=\Rr^d$ in the remainder of this proof. For $w(x, z)=\chi(\frac{z}{-n})v(x, z)$, we have $w(x, 0)=v(x, 0)=g(x)$ and $w$ vanishes near $z=-n$. Consequently,
\begin{align*}
\int_{\Rr^d}|\xi||\wh{g}(\xi)|^2d\xi&=\int_{\Rr^d}|\xi|\int_{-n}^0\p_z|\wh{w}(\xi, z)|^2=\Re\int_{\Rr^d}\int_{-n}^0\p_z\wh{w}(\xi, z)|\xi|\overline{\wh{w}(\xi, z)}dzd\xi \\
&=\Re\int_{\Rr^d}\int_{-n}^0\chi^2(\frac{z}{-n})\p_z\wh{v}(\xi, z)\overline{\wh{|D|v}(\xi, z)}dzd\xi\\
&\quad-\frac{1}{n}\Re\int_{\Rr^d}\int_{-n}^0\chi(\frac{z}{-n})\chi'(\frac{z}{-n})\wh{v}(\xi, z)\overline{\wh{|D|v}(\xi, z)}dzd\xi\\
&=2\int_{\Rr^d}\int_{-n}^0\chi^2(\frac{z}{-n})\p_zv(x, z)|D|v(x, z)dzdx\\
&\quad-\frac{2}{n}\int_{\Rr^d}\int_{-n}^0\chi(\frac{z}{-n})\chi'(\frac{z}{-n})v(x, z)|D|v(x, z)dzdx.
\end{align*}
Since $\p_zv$ and $|D|v$ belong to $L^2(\Rr^d\times \Rr_-)$, arguing as in \eqref{zerolimit}, we can pass to the limit $n\to \infty$ and obtain 
\bq\begin{aligned}
\int_{\Rr^d}|\xi||\wh{g}(\xi)|^2d\xi&=2\int_{\Rr^d}\int_{-\infty}^0\p_zv(x, z)|D|v(x, z)dzdx\\
&=2\int_{\Rr^d}\int_{-\infty}^0\p_zv(x, z)\mathcal{R}\cdot \na_xv(x, z)dzdx\\
&=\int_{\Rr^d}\int_{-\infty}^02\p_zv\mathcal{R}\cdot (\na_xv-\na f\p_zv)+ 2\p_zv\mathcal{R}\cdot(\na f\p_zv)dzdx\\
&=\int_{\Rr^d}\int_{-\infty}^0|\p_zv|^2+|\mathcal{R}\cdot (\na_xv-\na f\p_zv)|^2+ 2\p_zv\mathcal{R}\cdot(\na f\p_zv)\\
&\qquad\qquad -\left[\p_zv-\mathcal{R}\cdot (\na_xv-\na f\p_zv)\right]^2dzdx,
\end{aligned}
\eq
where $\mathcal{R}$ denotes the Riesz transform, $\wh{\mathcal{R}u}(\xi)=\frac{-i\xi}{|\xi|}\wh{u}(\xi)$. Using H\"older's inequality and the boundedness of $\mathcal{R}$ in $L^2$, we obtain
\bq\label{bound:trace2}
\begin{aligned}
\int_{\Rr^d}|\xi||\wh{g}(\xi)|^2d\xi&\le C\| \p_zv\|_{L^2}\| \na_xv-\na f\p_zv\|_{L^2}+C\|\na f\|_{L^\infty}\|\p_zv\|_{L^2}^2\\
&\le C(1+\| \na f\|_{L^\infty})\int_{\Rr^d}\int_{-\infty}^0\left|\na_xv-\na f\p_zv\right|^2+|\p_zv|^2 dzdx.
\end{aligned}
\eq
Finally, \eqref{coercive:inf} follows from \eqref{DN:videntity:inf} and \eqref{bound:trace2}.
\end{proof}
Next, we generalize \eqref{coercive:finite} and \eqref{coercive:inf} to the pairing $\langle G(g), \Phi'(g)\rangle_{H^{-\mez}(M^d), H^\mez(M^d)}$ for convex functions $\Phi$.  
\begin{prop}\label{prop:Phi}
Let $\Phi: \Rr\to \Rr$ be a $C^2$ convex function such that $\Phi'(z)/z$ is continuous on $\Rr$. Set 
\bq\label{def:Psi}
\Psi(z)=\int_0^z \sqrt{\Phi''(z')}dz'.
\eq
Let $g\in H^\mez(M^d)\cap L^\infty(M^d)$.

1) (The finite depth case) If $b$, $f\in \Lip(M^d)$ such that $f-b\in L^\infty(M^d)$ and \eqref{separation} holds, then there exists a constant $C=C(d)>0$ such that
\bq
\langle G(g), \Phi'(g)\rangle_{H^{-\mez}(M^d), H^\mez(M^d)}\ge \frac{Ch}{1+\|\na f\|_{L^\infty}^2+\|f-b\|_{W^{1, \infty}}^2}\| \Psi(g)\|^2_{\wt H^\mez(M^d)}.
\eq
2) (The infinite depth case)  If $f\in \Lip(M^d)$, then there exists a constant $C=C(d)>0$ such that
\bq
\langle G(g), \Phi'(g)\rangle_{H^{-\mez}(M^d), H^\mez(M^d)}\ge \frac{C}{1+\|\na f\|_{L^\infty}}\| \Psi(g)\|^2_{\dot H^\mez(M^d)}.
\eq
\end{prop}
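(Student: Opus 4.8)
The plan is to rerun the arguments of Propositions~\ref{prop:coercive:finite} and~\ref{prop:coercive:inf} essentially verbatim, the sole modification being that the Stokes formula~\eqref{Stokes} is tested against $\Phi'(v)$ rather than $v$; the resulting quadratic form will then be recognized as the one generated by $\Psi(v)$. Let $\phi$ solve \eqref{sys:elliptic} and $v=\phi\circ\mathcal{S}$ be its flattening, so that $\cnx_{x,z}(\mathcal{A}\na_{x,z}v)=0$, $e_{d+1}\cdot(\mathcal{A}\na_{x,z}v)|_{z=0}=G(g)$, and the bottom (resp.\ far-field) flux vanishes, exactly as in \eqref{div:eq}, \eqref{DN:v}, \eqref{Neumann:v}. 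Since $g\in L^\infty(M^d)$, the maximum principle gives $\|v\|_{L^\infty}=\|\phi\|_{L^\infty(\Omega)}\le\|g\|_{L^\infty}$; hence $\Phi''(v)$ and $\sqrt{\Phi''(v)}$ are bounded functions, and, together with $\na_{x,z}v\in L^2$ from Proposition~\ref{prop:DN}, the chain rule gives $\na_{x,z}\Phi'(v)=\Phi''(v)\,\na_{x,z}v\in L^2$ and $\na_{x,z}\Psi(v)=\sqrt{\Phi''(v)}\,\na_{x,z}v\in L^2$. First I would apply \eqref{Stokes} with $u=\mathcal{A}\na_{x,z}v$ and $w=\Phi'(v)$ in the finite depth case and, in the infinite depth case, rerun the truncation-to-a-slab and limit-as-$n\to\infty$ procedure of the proof of Proposition~\ref{prop:coercive:inf} with $\Phi'(v)$ in place of $v$ (legitimate since $\Phi'(v)$ is bounded and $\na_{x,z}\Phi'(v)\in L^2$, so the slab boundary terms vanish in the limit exactly as for \eqref{zerolimit}). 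Using $\cnx_{x,z}u=0$ and $\na_{x,z}\Phi'(v)=\Phi''(v)\na_{x,z}v$, this yields
\[
\langle G(g),\Phi'(g)\rangle_{H^{-\mez},H^\mez}=\int_{-1}^0\!\!\int_{M^d}\mathcal{A}\na_{x,z}v\cdot\na_{x,z}\Phi'(v)\,dxdz=\int_{-1}^0\!\!\int_{M^d}\Phi''(v)\,\mathcal{A}\na_{x,z}v\cdot\na_{x,z}v\,dxdz
\]
in the finite depth case (with $\int_{M^d}\int_{-\infty}^0$ in the infinite depth case), and the integrand is pointwise nonnegative.

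Next I would put $V=\Psi(v)$, so that $V|_{z=0}=\Psi(g)$ and $\na_{x,z}V=\sqrt{\Phi''(v)}\,\na_{x,z}v$. Writing $\Phi''(v)=\sqrt{\Phi''(v)}\cdot\sqrt{\Phi''(v)}$ inside the quadratic form gives the pointwise identity $\Phi''(v)\,\mathcal{A}\na_{x,z}v\cdot\na_{x,z}v=\mathcal{A}\na_{x,z}V\cdot\na_{x,z}V$, hence
\[
\langle G(g),\Phi'(g)\rangle_{H^{-\mez},H^\mez}=\int_{-1}^0\!\!\int_{M^d}\mathcal{A}\na_{x,z}V\cdot\na_{x,z}V\,dxdz
\]
in the finite depth case, and similarly with $\int_{M^d}\int_{-\infty}^0$ in the infinite depth case. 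This is precisely the right-hand side of \eqref{DN:videntity} (resp.\ \eqref{DN:videntity:inf}) with $v$ replaced by $V$. The crucial point is that the trace estimates \eqref{bound:trace} and \eqref{bound:trace2} were proved for an arbitrary $H^1$ (resp.\ $\dot H^1$) function, using nothing about it beyond its trace on $\{z=0\}$; applying them to $V$, whose trace is $\Psi(g)$, I would obtain
\[
\|\Psi(g)\|_{\wt H^\mez(M^d)}^2\le \frac{C}{h}\bigl(1+\|\na f\|_{L^\infty}^2+\|f-b\|_{W^{1,\infty}}^2\bigr)\int_{-1}^0\!\!\int_{M^d}\mathcal{A}\na_{x,z}V\cdot\na_{x,z}V\,dxdz
\]
in the finite depth case, and the analogue with factor $C(1+\|\na f\|_{L^\infty})$ and norm $\|\Psi(g)\|_{\dot H^\mez}$ in the infinite depth case. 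Chaining the two displays in each case gives the two claimed inequalities; the tracking of constants is identical to the two earlier proofs.

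I expect the main difficulty to lie not in the estimate itself but in the functional-analytic justifications collected in the first paragraph. One must check that the left-hand pairing is well defined, i.e.\ that $\Phi'(g)\in H^\mez(M^d)$; this uses $g\in H^\mez(M^d)\cap L^\infty(M^d)$ together with the stability of $H^\mez\cap L^\infty$ under composition with the locally Lipschitz function $\Phi'$, while the hypothesis that $\Phi'(z)/z$ be continuous on $\Rr$ forces $\Phi'(0)=0$ and, combined with continuity of $\Phi''$, guarantees $\Phi'(g),\Psi(g)\in L^2(M^d)$. One must also verify that \eqref{Stokes} is legitimately applied with $w=\Phi'(v)$, that is, $\Phi'(v)\in H^1(M^d\times(-1,0))$ in the finite depth case and that the truncated version belongs to $H^1$ with convergent boundary terms in the infinite depth case; both reduce, via the chain rule for $C^1$ nonlinearities with bounded derivative composed with $H^1$ functions together with the finite-depth trace bound $\|\Phi'(v)\|_{L^2}\lesssim\|\Phi'(g)\|_{L^2}+\|\na_{x,z}\Phi'(v)\|_{L^2}$, to the $L^\infty$ bound on $v$, which is exactly where the hypothesis $g\in L^\infty(M^d)$ is used. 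Apart from these points, the only new ingredient relative to Propositions~\ref{prop:coercive:finite}--\ref{prop:coercive:inf} is the replacement of the linear test function $v$ by the nonlinear ones $\Phi'(v)$ and $\Psi(v)$, a replacement made rigorous precisely by the maximum principle.
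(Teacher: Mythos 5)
Your proposal is correct and follows essentially the same route as the paper: flatten the domain, use the maximum principle to bound $v$, justify the chain rule $\na_{x,z}\Phi'(v)=\Phi''(v)\na_{x,z}v$ (the paper isolates this as Lemma \ref{lemm:chainrule}), apply the Stokes formula with $w=\Phi'(v)$ to identify the pairing with the Dirichlet-type form of $\Psi(v)$, and then reuse the trace estimates \eqref{bound:trace} and \eqref{bound:trace2} with $\Psi(v)$ in place of $v$. The handling of the vanishing slab boundary term in the infinite depth case via the bound $|\Phi'(v)|\le C|v|$ (from continuity of $\Phi'(z)/z$ and boundedness of $v$) is also exactly the paper's remark.
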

\begin{proof}
We shall only consider the more difficult case $M^d=\Rr^d$. Since  $\Phi'(z)/z$ is continuous and $g\in H^\mez(\Rr^d)\cap L^\infty(\Rr^d)$, it can be shown that $\Phi'(g)\in H^\mez(\Rr^d)\subset \dot H^\mez(\Rr^d)\subset \wt H^\mez(\Rr^d)$.  Let $v=\phi\circ \mathcal{S}$ as given in the proof of Propositions \ref{prop:coercive:finite} and \ref{prop:coercive:inf}. By the maximum principle for the harmonic function $\phi$, we have 
\bq\label{max:v}
\| v\|_{L^\infty(M^d\times J)}= \| \phi\|_{L^\infty(\Omega)}\le \| g\|_{L^\infty(M^d)},
\eq
where $J=(-1, 0)$ in the finite depth case and $J=(-\infty, 0)$ in the infinite depth case. From \eqref{max:v} and the assumption that $\Phi'(z)/z$ is continuous, we deduce that $\Phi'(v)\in L^2(M^d\times J)$. 

1) The finite depth case.  Lemma \ref{lemm:chainrule} below implies that $\na_{x, z}\Phi'(v)=\Phi''(v)\na_{x,z}v\in L^2(M^d\times (-1, 0))$.  Thus we can apply the Stokes formula \eqref{Stokes} with $u=\mathcal{A}\na_{x, z}v$ and $w=\Phi'(v)\in H^1(M^d\times (-1, 0))$ to have
\[
\begin{aligned}
\langle G(g), \Phi'(g)\rangle_{H^{-\mez}, H^\mez}&=\int_{-1}^0\int_{M^d} \mathcal{A}\na_{x, z}v\cdot \na_{x,z }v \Phi''(v)dxdz\\
&=\int_{-1}^0\int_{M^d} \mathcal{A}\na_{x, z}\Psi(v)\cdot \na_{x,z }\Psi(v)dxdz\\
&=\int_{-1}^0\int_{M^d}\p_z\varrho \left\{\left|\na_x\Psi(v)-\frac{\na_x\varrho}{\p_z\varrho}\p_z\Psi(v)\right|^2+\frac{|\p_z\Psi(v)|^2 }{|\p_z\varrho|^2}\right\}dxdz.
\end{aligned}
\]
We then  conclude by following the proof of \eqref{bound:trace} with $\Psi(g)$ in place of $g$ and $\Psi(v)$ in place of $v$.

2) The infinite depth case. The proof proceeds similarly to that of Proposition \ref{prop:coercive:inf} and the finite depth case 1) above. We only remark that in place of \eqref{zerolimit}, we need to prove 
\[
\lim_{n\to \infty} \frac{1}{n}\int_{M^d} \int_{-n}^0 \big(e_{d+1}\cdot\mathcal{A}\na_{x, z}v\big)\chi'(\frac{z}{-n}) \Phi'(v)dxdz=0.
\]
Since $\Phi'(z)/z$ is continuous and $v$ is bounded, we can replace $\Phi'(v)$ by $v$ in the preceding limit and argue as in  the proof of \eqref{zerolimit}.
\end{proof}
\begin{coro}
For any $p\ge 2$, there exist positive constants $C=C(d)$ and $C'=C'(p, d)$ such that for any $g\in H^\mez(\T^d)\cap L^\infty(\T^d)$ satisfying $\int_{\T^d}g=0$, we have
\bq
\langle G(g), p|g|^{p-2}g\rangle_{H^{-\mez}(\T^d), H^\mez(\T^d)}\ge M\left(\| |g|^{p/2-1}g\|^2_{\dot H^\mez(\T^d)}+C'\| g\|^p_{L^p(\T^d)}\right),
\eq
where 
\bq\label{def:M}
M=
\begin{cases}
\frac{Ch}{1+\|\na f\|_{L^\infty}^2+\|f-b\|_{W^{1, \infty}}^2}\quad\text{in~the~finite~depth~case},\\
\frac{C}{1+\| \na f\|_{L^\infty(M^d)}}\quad\text{in~the~infinite~depth~case}.
\end{cases}
\eq
\end{coro}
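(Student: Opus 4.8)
The strategy is to specialize Proposition~\ref{prop:Phi} to the convex weight $\Phi(z)=|z|^p$ and then to convert the resulting homogeneous fractional Sobolev lower bound into the stated $L^p$ bound by a Poincar\'e-type inequality on $\T^d$ exploiting the hypothesis $\int_{\T^d}g=0$. First I would check that, for $p\ge2$, $\Phi(z)=|z|^p$ satisfies the hypotheses of Proposition~\ref{prop:Phi}: it is convex, it is of class $C^2(\Rr)$ because $\Phi''(z)=p(p-1)|z|^{p-2}$ is continuous (this is where $p\ge2$ enters), and $\Phi'(z)/z=p|z|^{p-2}$ is continuous. A direct computation gives, with $\Psi$ as in \eqref{def:Psi},
\[
\Psi(z)=\int_0^z\sqrt{p(p-1)}\,|z'|^{p/2-1}\,dz'=c_p\,|z|^{p/2-1}z,\qquad c_p:=\frac{2\sqrt{p(p-1)}}{p}=2\sqrt{1-\tfrac1p},
\]
so that $c_p^2=4(1-\tfrac1p)\ge2$ for $p\ge2$. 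Since $\Phi'(z)=p|z|^{p-2}z$ and, by the convention of Proposition~\ref{prop:DN}, $\wt H^\mez(\T^d)=\dot H^\mez(\T^d)$, Proposition~\ref{prop:Phi} (its part 1) in the finite depth case and its part 2) in the infinite depth case) yields
\[
\langle G(g),\,p|g|^{p-2}g\rangle_{H^{-\mez}(\T^d),H^\mez(\T^d)}\ \ge\ M\,c_p^2\,\bigl\||g|^{p/2-1}g\bigr\|_{\dot H^\mez(\T^d)}^2,
\]
with $M$ as in \eqref{def:M}; here the pairing is well-defined because $p|g|^{p-2}g\in H^\mez(\T^d)$, exactly as in the proof of Proposition~\ref{prop:Phi}.

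The key remaining ingredient is the Poincar\'e-type inequality: there is $C''=C''(p,d)>0$ such that for every $g\in H^\mez(\T^d)\cap L^\infty(\T^d)$ with $\int_{\T^d}g=0$,
\[
\|g\|_{L^p(\T^d)}^p=\bigl\||g|^{p/2-1}g\bigr\|_{L^2(\T^d)}^2\ \le\ C''\,\bigl\||g|^{p/2-1}g\bigr\|_{\dot H^\mez(\T^d)}^2,
\]
the equality being immediate from $|g|^{p-2}g^2=|g|^p$. I would prove this by contradiction and compactness. Were it false, there would exist $g_n$ with zero mean, $\|g_n\|_{L^p(\T^d)}=1$, such that $G_n:=|g_n|^{p/2-1}g_n$ satisfies $\|G_n\|_{L^2(\T^d)}=1$ and $\|G_n\|_{\dot H^\mez(\T^d)}\to0$; then $(G_n)$ is bounded in $H^\mez(\T^d)$, so by the compact embedding $H^\mez(\T^d)\hookrightarrow L^2(\T^d)$ a subsequence converges in $L^2$ and a.e.\ to some $G$ with $\|G\|_{L^2(\T^d)}=1$ and, by convergence of Fourier coefficients, $\|G\|_{\dot H^\mez(\T^d)}=0$; hence $G\equiv c$ with $|c|>0$. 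Inverting the relation, $g_n=|G_n|^{2/p-1}G_n\to|c|^{2/p-1}c=:g_\infty$ a.e.; since $(g_n)$ is bounded in $L^p(\T^d)$ with $p>1$ it is uniformly integrable, so Vitali's theorem gives $g_n\to g_\infty$ in $L^1(\T^d)$ and therefore $0=\int_{\T^d}g_n\to\int_{\T^d}g_\infty=|c|^{2/p-1}c\,|\T^d|\neq0$, a contradiction.

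To conclude I would combine the two: since $c_p^2\ge2$ we can split $c_p^2\,\bigl\||g|^{p/2-1}g\bigr\|_{\dot H^\mez}^2$ as $\bigl\||g|^{p/2-1}g\bigr\|_{\dot H^\mez}^2+(c_p^2-1)\bigl\||g|^{p/2-1}g\bigr\|_{\dot H^\mez}^2$, and the Poincar\'e-type inequality bounds the second summand below by $(c_p^2-1)(C'')^{-1}\|g\|_{L^p}^p$; thus the corollary holds with $C'=C'(p,d):=(c_p^2-1)/C''$. The main obstacle is the Poincar\'e-type inequality itself: the pointwise fact that $|g|^{p/2-1}g$ has the sign of $g$ and hence is nonconstant when $\int_{\T^d}g=0$ is immediate, but turning it into a \emph{uniform} lower bound is not, which is why I would pass through compactness; since the exact value of $C'$ is irrelevant, a nonconstructive argument suffices. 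Everything else---the choice of $\Phi$, the computation of $\Psi$, and the bookkeeping---is routine.
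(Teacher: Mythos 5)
Your proposal is correct and follows essentially the same route as the paper: specialize Proposition \ref{prop:Phi} to $\Phi(z)=|z|^p$, compute $\Psi(z)=2\sqrt{(p-1)/p}\,|z|^{p/2-1}z$, and then absorb the $L^p$ norm via a compactness/contradiction Poincar\'e-type inequality that exploits the zero-mean hypothesis. The only (immaterial) differences are that you state the Poincar\'e inequality directly for mean-zero $g$ and close the compactness step with Vitali's theorem, whereas the paper states it with an extra $\int_{\T^d}g$ term and finishes with an a.e.-convergent, $L^2$-dominated subsequence and dominated convergence.
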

\begin{proof}
For $p\ge 2$, Proposition \ref{prop:Phi} is applicable with $\Phi(z)=|z|^p$ and $\Psi(z)=2\sqrt{\frac{p-1}{p}}|z|^{p/2-1}z$.  We obtain 
\[
\langle G(g), p|g|^{p-2}g\rangle_{H^{-\mez}(\T^d), H^\mez(\T^d)}\ge M\frac{p-1}{p} \| |g|^{p/2-1}g\|^2_{\dot H^\mez(\T^d)}\ge M\mez \| |g|^{p/2-1}g\|^2_{\dot H^\mez(\T^d)},
\]
where $M$ is given by \eqref{def:M}. It then suffices to prove that  for some $C'=C'(p, d)>0$,
\bq\label{Pioncare:Lp}
\| g\|_{L^p(\T^d)}\le C'\| |g|^{p/2-1}g\|_{\dot H^\mez(\T^d)}^\frac{2}{p}+C'\int_{\T^d}g.
\eq
For the sake of contradiction, assume that for all $n\in \Nn$, there exists $g_n\ne 0$ such that
\bq\label{Pioncare:2}
\frac{1}{n}\| g_n\|_{L^p(\T^d)}\ge \| |g_n|^{p/2-1}g_n\|_{\dot H^\mez(\T^d)}^\frac{2}{p}+\int_{\T^d}g_n.
\eq
By the homogeneity of \eqref{Pioncare:2} in $g_n$, we can assume that $\| g_n\|_{L^p(\T^d)}=1$ for all $n$. Set  $q_n=|g_n|^{p/2-1}g_n$. We have $\|q_n\|_{L^2}=\| g_n\|_{L^p}^{p/2}=1$ and thus the sequence $(q_n)$ is bounded in $H^\mez(\T^d)$. By the compact embedding $H^\mez(\T^d)\subset L^2(\T^d)$, there exists a subsequence, which we renumber $(q_n)$, that converges weakly to $q$ in $H^\mez(\T^d)$ and converges strongly  to $q$ in $L^2(\T^d)$. In particular, we have $\| q\|_{L^2}=1$. On the other hand, \eqref{Pioncare:2} implies that $\| q_n\|_{\dot H^\mez}\le 1/n$, whence $\| q\|_{\dot H^\mez}=0$ and hence $q=c$ is a constant. Since $\|q\|_{L^2}=1$, $c$ must be nonzero. Assume without loss of generality that $c>0$. From \eqref{Pioncare:2} we deduce 
\[
0=\lim_{n\to \infty}\int_{\T^d}g_n(x)dx=\lim_{n\to \infty}\int_{\T^d} |q_n(x)|^{2/p}\text{sign}(q_n(x))dx.
\]
Since $q_n\to q=c$ in $L^2$, there exists a subsequence, which we renumber $q_n$, such that $q_n(x)\to c$ a.e. $\T^d$ and there exists $Q\in L^2(\T^d)$ such that  for all $n$, $|q_n(x)|\le Q(x)$ a.e. $\T^d$.  Then $|q_n|^{2/p}\text{sign}(q_n)\to c^{2/p}$ and $||q_n|^{2/p}\text{sign}(q_n)|\le |Q|^{2/p}$ a.e. $\T^d$. Since $Q\in L^2(\T^d)$, we have $|Q|^{2/p}\in L^p(\T^d)\subset L^1(\T^d)$ for all $p\ge 1$.  Therefore, the dominated convergence theorem yields 
\[
0=\int_{\T^d} c^{2/p}=c^{2/p}|\T^d|.
\]
This contradicts the fact that $c>0$.
\end{proof}
\begin{lemm}\label{lemm:chainrule}
Let $U\subset \Rr^N$ be an open set and let $\Gamma: \Rr\to \Rr$ be a $C^1$ function. If  $u\in \Ll^\infty(U)$ and $\na u\in \Ll^1(U)$, then $\na \Gamma(u)=\Gamma'(u)\na u$. 
\end{lemm}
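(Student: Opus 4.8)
The plan is the standard localization/mollification argument, since the asserted identity is local. First I would reduce to a bounded subdomain: it suffices to establish $\na\Gamma(u)=\Gamma'(u)\na u$ in $\mathcal D'(B')$ for every ball $B'\Subset U$, because the right-hand side then lies in $\Ll^1(U)$ and hence, by definition, is the weak gradient of $\Gamma(u)$ on $U$. On such a ball, fixing $B'\Subset B\Subset U$, I would record the finite constants $L=\|u\|_{L^\infty(B)}$, $C_0=\max_{[-L,L]}|\Gamma|$ and $C_1=\max_{[-L,L]}|\Gamma'|$ (all finite since $\Gamma\in C^1$); this is where the hypothesis $u\in\Ll^\infty$ enters.

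Next I would mollify. Setting $u_\eps=\rho_\eps*u$, for $\eps$ small one has $u_\eps\in C^\infty(\overline{B'})$, $\|u_\eps\|_{L^\infty(B')}\le L$ (mollification does not increase the sup norm), $u_\eps\to u$ in $L^1(B')$ and, along a subsequence, a.e.\ on $B'$, while $\na u_\eps=\rho_\eps*\na u\to\na u$ in $L^1(B')$. For the smooth function $u_\eps$ the classical chain rule applies pointwise on $B'$: $\na\Gamma(u_\eps)=\Gamma'(u_\eps)\,\na u_\eps$.

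The remaining step is to pass to the limit $\eps\to0$ in this identity in $\mathcal D'(B')$. For the left-hand side I would use that $|\Gamma(u_\eps)|\le C_0\in L^1(B')$ and $\Gamma(u_\eps)\to\Gamma(u)$ a.e.\ (continuity of $\Gamma$), so that $\Gamma(u_\eps)\to\Gamma(u)$ in $L^1(B')$ by dominated convergence, whence $\na\Gamma(u_\eps)\to\na\Gamma(u)$ distributionally. For the right-hand side I would split $\Gamma'(u_\eps)\na u_\eps-\Gamma'(u)\na u=\Gamma'(u_\eps)(\na u_\eps-\na u)+(\Gamma'(u_\eps)-\Gamma'(u))\na u$: the first term is bounded in modulus by $C_1|\na u_\eps-\na u|$, which tends to $0$ in $L^1(B')$, and the second tends to $0$ a.e.\ (continuity of $\Gamma'$) and is dominated by $2C_1|\na u|\in L^1(B')$, so it tends to $0$ in $L^1(B')$. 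Equating the two limits gives $\na\Gamma(u)=\Gamma'(u)\na u$ on $B'$, and exhausting $U$ by such balls finishes the proof.

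I do not anticipate a genuine obstacle; the proof is routine. The only points requiring a little care are performing the localization \emph{before} invoking bounds on $\Gamma$ and $\Gamma'$, so that these can be treated as bounded (and uniformly continuous) functions on the compact interval $[-L,L]$ containing the range of every $u_\eps$, and invoking the elementary fact that mollification does not increase the $L^\infty$ norm, so that this interval is independent of $\eps$.
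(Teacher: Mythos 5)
Your proof is correct and takes essentially the same route as the paper: localize to a compactly contained subdomain, mollify, use the uniform $L^\infty$ bound to control $\Gamma$ and $\Gamma'$ on a fixed compact interval, and pass to the limit in $L^1$ by splitting $\Gamma'(u_\eps)\na u_\eps-\Gamma'(u)\na u$ into the same two terms, handled by $L^1$ convergence of the gradients and dominated convergence along an a.e.\ convergent subsequence. No substantive differences.
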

\begin{proof}
Let $V\Subset W \Subset  U$. Let $\rho_n$ be the standard mollifier at scale $1/n$ and set $u_n=(u 1_W)*\rho_n$, where $1_V$ is the indicator function of $V$. Since $u\in \Ll^\infty(U)$ and $\na u\in \Ll^1(U)$, we have that  
\begin{align*}
&\na u_n\to \na u\quad\text{in } W^{1, 1}(V),\\
&\exists M>0,~\forall n,~\| u_n\|_{L^\infty(\Rr^N)}+\| u\|_{L^\infty(U)}\le M.
\end{align*} 
It follows that
\[
\int_V |\Gamma(u_n)-\Gamma(u)|\le \max_{[-M, M]}|\Gamma'|\int_V |u_n-u|\to 0\quad\text{as~}n\to \infty
\]
and
\begin{align*}
\int_V|\Gamma'(u_n)\na u_n-\Gamma'(u)\na u|&\le \int_V |\Gamma'(u_n)||\na u_n-\na u|+\int_V|\Gamma'(u_n)-\Gamma'(u)||\na u|\\
& \le \max_{[-M, M]}|\Gamma'|\int_V |\na u_n-\na u|+\int_V|\Gamma'(u_n)-\Gamma'(u)||\na u|.
\end{align*}
A subsequence of $(u_n)$, which we renumber $(u_n)$, must converge a.e. to $u$ in $V$. Hence the last integral converges to $0$ by the dominated convergence theorem. Consequently the sequences $(\Gamma(u_m))$, $(\Gamma'(u_n)\na u_n)$ converge to $\Gamma(u)$, $\Gamma'(u)\na u$ respectively in $L^1(V)$. Since $\na \Gamma(u_n)=\Gamma'(u_n)\na u_n$ and $V$ is arbitrary,  we conclude that  $\na \Gamma(u)=\Gamma'(u)\na u$. 
\end{proof}

\section{Time decay for the one-phase Muskat problem}\label{section:Muskat}
The one-phase Muskat problem concerns the dynamics of the free boundary of a fluid occupying a region in a porous medium. The fluid motion is modeled by Darcy's law with gravity. When the fluid domain has the form \eqref{striplike} or \eqref{halfspace}, the free boundary $f$ obeys the equation
\bq\label{eq:Muskat}
\p_t f=-G_f(f),
\eq
where we write $G_f$ to emphasize the dependence of $G$ on the free boundary $f$. Some physical constants have been normalized in \eqref{eq:Muskat}. We refer to \cite{NgPa} for a derivation of \eqref{eq:Muskat}. 

We recall the following global well-posedness result.
\begin{theo}[\protect{\cite[Theorem 1.2]{DGN}}]
Let $\Omega$ be the domain \eqref{halfspace} with $M=\T$. For any initial data $f_0\in W^{1, \infty}(\T)$, equation \eqref{eq:Muskat} has a unique viscosity solution 
\bq\label{reg:f}
f\in C(\T\times [0, \infty))\cap L^\infty([0, \infty); W^{1, \infty}(\T)),\quad \p_t f\in L^\infty([0, \infty); L^2(\T)).
\eq
In particular, \eqref{eq:Muskat} is satisfied in the $L^\infty_t L^2_x$ sense.  Moreover, we have
\bq\label{max:f}
\| f(t)\|_{L^\infty(\T)}\le \|f(0)\|_{L^\infty(\T)},\quad\int_\T f(x, t)dx=\int_\T f(x, 0)dx\quad\forall t>0
\eq
and
\bq\label{max:df}
 \| \p_xf(t)\|_{L^\infty(\T)}\le \|\p_xf(0)\|_{L^\infty(\T)}\quad a.e.~ t>0.
\eq
\end{theo}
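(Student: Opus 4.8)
The plan is to obtain the global solution as a \emph{viscosity solution} of the nonlocal evolution \eqref{eq:Muskat}, constructed by a vanishing-regularization limit, with the quantitative bounds \eqref{max:f}--\eqref{max:df} extracted from the order-preserving and translation-invariant structure of the equation, and with uniqueness furnished by a comparison principle. The first and central task is to analyze the fully nonlinear operator $\mathcal F(f):=G_f(f)$; by Proposition \ref{prop:DN} it is a well-defined element of $H^{-\mez}(\T)$ whenever $f\in\Lip(\T)$, so \eqref{eq:Muskat} makes sense weakly. Two structural facts drive everything. First, adding a constant to a solution yields a solution and constants are stationary: if $\phi$ is the harmonic function attached to $(f,\Omega_f)$, then $\phi(x,y-c)+c$ is attached to $(f+c,\Omega_{f+c})$, hence $G_{f+c}(f+c)=G_f(f)$ and $G_c(c)=0$. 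Second, $\mathcal F$ enjoys the same contact-point monotonicity as $|D|$: if $f,g\in\Lip(\T)$ satisfy $f\le g$ with equality at $x_0$, then $G_f(f)(x_0)\ge G_g(g)(x_0)$. This can be read off from pointwise Rellich-type representations of $gG(g)$ in the spirit of \cite{CC0,CC,AMS}, or from a Hopf boundary comparison for the harmonic extensions in the nested domains $\Omega_f\subset\Omega_g$. Together with the translation invariance of $G_\bullet(\bullet)$ in $x$, these facts place \eqref{eq:Muskat} in the framework of viscosity solutions for a degenerate-elliptic nonlocal operator and yield, via a doubling-of-variables argument adapted to $G_f$, the comparison principle: viscosity solutions ordered at $t=0$ remain ordered; in particular the solution, once it exists, is unique.

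Granting comparison, the bounds \eqref{max:f}--\eqref{max:df} are essentially free. Comparing $f$ with the stationary constants $\pm\|f_0\|_{L^\infty}$ gives $\|f(t)\|_{L^\infty}\le\|f_0\|_{L^\infty}$. Conservation of the mean is the identity $\int_\T G_f(f)\,dx=0$, which is the Stokes formula \eqref{Stokes} with $w\equiv 1$ together with the vanishing flux through the bottom (the Neumann condition in \eqref{sys:elliptic}, or the decay of $\na_{x,y}\phi$), whence $\tfrac{d}{dt}\int_\T f\,dx=-\int_\T G_f(f)\,dx=0$. The Lipschitz bound \eqref{max:df} uses translation invariance: for $\tau\in\T$ both $f(\cdot+\tau,t)$ and $f(\cdot,t)$ solve \eqref{eq:Muskat}, as do $f(\cdot,t)\pm|\tau|\|\p_xf_0\|_{L^\infty}$; since these bracket $f_0(\cdot+\tau)$ at $t=0$, comparison gives $\|f(\cdot+\tau,t)-f(\cdot,t)\|_{L^\infty}\le|\tau|\|\p_xf_0\|_{L^\infty}$, and letting $\tau\to 0$ yields \eqref{max:df}.

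For the construction I would mollify $f_0$ to $f_0^\eps\in C^\infty(\T)$. The equation is quasilinear parabolic of order one — the linearization of $g\mapsto G_g(g)$ is elliptic with principal symbol $|\xi|$ — so a fixed-point argument gives a smooth local solution $f^\eps$, and the a priori bounds above, uniform in $\eps$, make it global with $\|f^\eps(t)\|_{W^{1,\infty}}\le\|f_0^\eps\|_{W^{1,\infty}}$. Differentiating \eqref{eq:Muskat} in $t$ and testing against $\p_tf^\eps$, using the nonnegativity of the linearized operator, gives $\tfrac{d}{dt}\|\p_tf^\eps(t)\|_{L^2}^2\le 0$; combined with the parabolic smoothing of the principal part this bounds $\|\p_tf^\eps(t)\|_{L^2}$ in terms of $\|f_0\|_{W^{1,\infty}}$ alone, uniformly in $\eps$ and $t$. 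Then $\{f^\eps\}$ is bounded in $C(\T\times[0,\infty))\cap L^\infty_tW^{1,\infty}_x$ and equicontinuous in $t$, so Arzel\`a--Ascoli produces a locally uniform limit $f$; stability of viscosity solutions under locally uniform convergence (using the comparison structure) makes $f$ a viscosity solution, it satisfies \eqref{reg:f}, and it inherits \eqref{max:f}--\eqref{max:df}. One could also construct $f$ by Perron's method once comparison is in hand and recover the bounds afterwards.

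I expect the main obstacle to be the first step: setting up a working viscosity-solution theory for $\mathcal F(f)=G_f(f)$, which is simultaneously fully nonlinear, nonlocal, and defined through an elliptic problem on the $f$-dependent domain $\Omega_f$. Establishing the contact-point monotonicity with only Lipschitz competitors, and carrying it through the doubling-of-variables argument in a way compatible both with the nonlocality of $G_f$ and with the moving domain, is the delicate point; the standard machinery for local equations, or for nonlocal equations with a fixed kernel, does not transfer verbatim. The order-preservation estimates, the regularization, and the compactness passage to the limit are then routine.
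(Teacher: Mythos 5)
First, a point of reference: the paper does not prove this statement. It is quoted verbatim from \cite[Theorem 1.2]{DGN} and used as a black box, so there is no internal proof to compare yours against. Measured against the actual argument in \cite{DGN}, your outline identifies the right skeleton: the result there is indeed obtained in a viscosity-solution framework; the two structural inputs are exactly the ones you name (invariance under adding constants, with constants stationary, and the contact-point monotonicity $G_f(f)(x_0)\ge G_g(g)(x_0)$ when $f\le g$ touch at $x_0$, proved by a Hopf-type comparison of the harmonic extensions in the nested domains $\Omega_f\subset\Omega_g$); and the bounds \eqref{max:f}--\eqref{max:df} are extracted just as you describe, by comparison with constants and with horizontal translates, with conservation of the mean coming from the divergence theorem applied to the harmonic extension. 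As a reconstruction of the strategy, your proposal is on target.

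As a proof, however, it has the gap you yourself flag, and that gap is essentially the whole theorem. The comparison principle for $\p_t f=-G_f(f)$ --- a fully nonlinear, nonlocal operator defined through an elliptic problem on the $f$-dependent domain, evaluated on merely Lipschitz competitors --- is the main content of \cite{DGN}; invoking ``a doubling-of-variables argument adapted to $G_f$'' names the difficulty rather than resolving it, since the contact-point inequality is only available against sufficiently smooth test configurations and the definition of viscosity solution, its stability, and the uniqueness proof must all be set up consistently with that. Two further steps are asserted too quickly. The bound $\p_t f\in L^\infty_t L^2_x$ does not follow from ``nonnegativity of the linearized operator'': differentiating the equation in time produces, besides $-G_f(\p_t f)$, first-order drift terms coming from the shape derivative of $f\mapsto G_f$, and the monotonicity of $\|\p_t f(t)\|_{L^p}$ in \cite{DGN} is a separate theorem exploiting the precise structure of those terms. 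Likewise, the smooth local theory you invoke for the mollified data is itself nontrivial (it is essentially the content of \cite{NgPa}). None of this makes your plan wrong --- it is, in outline, the plan of \cite{DGN} --- but as written it is a program rather than a proof.
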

The precise definition of viscosity solutions of \eqref{eq:Muskat} is given in Definition 6.1 in \cite{DGN}. In what follows, we will only need the fact that equation \eqref{eq:Muskat} is satisfied in $L^\infty_t L^2_x$. 

We now apply the coercive estimates in the preceding section to prove the following result on time decay of the solutions.
\begin{prop}
For any $f_0\in W^{1, \infty}(\T))$, we have 
\[
f\in L^2([0, \infty); \dot H^\mez(\T)),\quad \p_tf\in L^2([0, \infty); H^{-\mez}(\T)).
\]
If in addition $\int_{\T} f_0=0$, then $\| f(t)\|_{H^\alpha}$, $\| f(t)\|_{C^\alpha}$ and $\| \p_t f(t)\|_{H^{-\eps}}$ decay exponentially as $t\to \infty$ for any $\alpha \in (0, 1)$ and any $\eps>0$.
\end{prop}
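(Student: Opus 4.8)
The plan is to use the equation $\p_t f = -G_f(f)$ together with the coercivity estimate \eqref{coercive:inf} as an energy-dissipation mechanism. First I would pair the equation with $f(t)$ in the $H^{-\mez}, H^\mez$ duality. Since $f(t) \in W^{1,\infty}(\T) \subset H^\mez(\T)$ by \eqref{reg:f}, this is legitimate, and using $\p_t f \in L^\infty_t L^2_x$ one checks that $t \mapsto \mez\|f(t)\|_{L^2}^2$ is absolutely continuous with $\frac{d}{dt}\mez\|f(t)\|_{L^2}^2 = -\langle G_f(f), f\rangle$. Then \eqref{coercive:inf}, applied with the boundary function $f(t)$ itself (whose Lipschitz norm is controlled uniformly in $t$ by \eqref{max:df}), gives
\bq\label{prop:energy}
\frac{d}{dt}\mez\|f(t)\|_{L^2(\T)}^2 + \frac{C}{1+\|\p_x f_0\|_{L^\infty}} \|f(t)\|_{\dot H^\mez(\T)}^2 \le 0.
\eq
Integrating in time from $0$ to $\infty$ yields $f \in L^2([0,\infty); \dot H^\mez(\T))$ with $\int_0^\infty \|f(t)\|_{\dot H^\mez}^2\, dt \le \frac{1+\|\p_x f_0\|_{L^\infty}}{2C}\|f_0\|_{L^2}^2$. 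For the statement on $\p_t f$, I would use $\p_t f = -G_f(f)$ and the boundedness estimate \eqref{bound:DNinf}: $\|\p_t f(t)\|_{H^{-\mez}} = \|G_f(f)\|_{H^{-\mez}} \le C(\|\p_x f_0\|_{L^\infty})\|f(t)\|_{\dot H^\mez}$, so $\p_t f \in L^2([0,\infty); H^{-\mez}(\T))$ follows from the previous line.

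For the exponential decay under the zero-mean hypothesis, note first that by \eqref{max:f} the mean is conserved, so $\int_\T f(\cdot, t) = 0$ for all $t$; hence the Poincaré inequality on $\T$ gives $\|f(t)\|_{L^2} \le C\|f(t)\|_{\dot H^\mez}$. Feeding this into \eqref{prop:energy} produces a differential inequality $\frac{d}{dt}\|f(t)\|_{L^2}^2 \le -c\|f(t)\|_{L^2}^2$ with $c = c(\|\p_x f_0\|_{L^\infty}) > 0$, and Grönwall gives $\|f(t)\|_{L^2(\T)} \le e^{-ct/2}\|f_0\|_{L^2(\T)}$. To upgrade $L^2$ decay to $H^\alpha$ and $C^\alpha$ decay for $\alpha \in (0,1)$, I would interpolate: by \eqref{max:df} we have a uniform bound $\|f(t)\|_{H^1(\T)} \le \|f(t)\|_{L^2} + \|\p_x f_0\|_{L^\infty}|\T|^{1/2} \le A$ independent of $t$ (indeed $\|f\|_{W^{1,\infty}}$ is uniformly bounded), so $\|f(t)\|_{H^\alpha} \le \|f(t)\|_{L^2}^{1-\alpha}\|f(t)\|_{H^1}^\alpha \le A^\alpha e^{-(1-\alpha)ct/2}\|f_0\|_{L^2}^{1-\alpha}$, which decays exponentially for every $\alpha < 1$. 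For $C^\alpha$ I would instead interpolate $C^\alpha$ between $L^\infty$ (or $C^0$) and $C^1 \supset W^{1,\infty}$: using $\|f(t)\|_{C^\beta} \le C\|f(t)\|_{H^s}$ for $s > \beta + \mez$ together with the just-proved $H^s$ decay (choosing any $s \in (\alpha + \mez, 1)$, possible since $\alpha < \mez$; for $\alpha \ge \mez$ one first gains a fixed amount of regularity, see below), or more directly via the Gagliardo–Nirenberg-type bound $\|u\|_{C^\alpha(\T)} \le C\|u\|_{L^2(\T)}^{1-\theta}\|u\|_{C^1(\T)}^{\theta}$ for appropriate $\theta = \theta(\alpha) \in (0,1)$. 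Finally, $\|\p_t f(t)\|_{H^{-\eps}} \le \|\p_t f(t)\|_{L^2}$ need not help directly; instead I would use $\|\p_t f(t)\|_{H^{-\mez}} = \|G_f(f)\|_{H^{-\mez}} \lesssim \|f(t)\|_{\dot H^\mez} \lesssim \|f(t)\|_{L^2}^{1/2}\|f(t)\|_{H^1}^{1/2}$ by interpolating $\dot H^\mez$ between $L^2$ and $H^1$, giving exponential decay of $\|\p_t f(t)\|_{H^{-\mez}}$, and then $\|\p_t f\|_{H^{-\eps}}$ decays exponentially for every $\eps > 0$ by interpolating $H^{-\eps}$ between $H^{-\mez}$ and $L^2$ (both of which decay: the latter because $\|\p_t f(t)\|_{L^2} = \|G_f(f)(t)\|_{L^2}$ and one can bound this by higher Sobolev norms of $f$ using elliptic regularity, which are uniformly bounded, then interpolate).

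The main obstacle I anticipate is \emph{rigor in the energy identity}: justifying that $\frac{d}{dt}\mez\|f(t)\|_{L^2}^2 = -\langle G_f(f(t)), f(t)\rangle$ holds in the a.e.\ sense, given only $f \in C_t \cap L^\infty_t W^{1,\infty}_x$ and $\p_t f \in L^\infty_t L^2_x$ and that \eqref{eq:Muskat} holds merely in the $L^\infty_t L^2_x$ sense for a viscosity solution. One must pair $\p_t f(t) \in L^2(\T)$ against $f(t) \in H^\mez(\T)$, note the duality pairing coincides with the $L^2$ inner product, and use absolute continuity of $t \mapsto \|f(t)\|_{L^2}^2$ coming from $f \in C([0,\infty); L^2)$ with $\p_t f \in L^\infty_t L^2_x$ (a standard lemma, e.g. via difference quotients). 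A secondary subtlety is that \eqref{coercive:inf} is stated for $g \in H^\mez(M^d)$, so one must confirm $f(t) \in H^\mez(\T)$; this follows since $f(t) \in W^{1,\infty}(\T) \subset H^1(\T) \subset H^\mez(\T)$ on the compact manifold $\T$. Once the identity \eqref{prop:energy} is in hand, all the decay statements are routine interpolation.
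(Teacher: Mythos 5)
Your proposal follows essentially the same route as the paper: the energy identity $\tfrac12\tfrac{d}{dt}\|f\|_{L^2}^2=-\langle G_f(f),f\rangle$, the coercivity \eqref{coercive:inf} combined with the uniform Lipschitz bound \eqref{max:df}, conservation of the mean plus Poincar\'e for the exponential $L^2$ decay, and interpolation against the uniform $W^{1,\infty}$ bound for the $H^\alpha$ and $C^\alpha$ decay. The only divergence is the last step: the paper gets the $H^{-\eps}$ decay of $\p_t f$ directly from the estimate $\|G_f(f)\|_{H^{\sigma-1}}\lesssim(1+\|\p_xf\|_{L^\infty})^2\|f\|_{\dot H^\sigma}$ of \cite{DGN} with $\sigma=1-\eps$, whereas your interpolation of $H^{-\eps}$ between $H^{-\mez}$ and $L^2$ also works --- but note that your parenthetical claim that $\|\p_t f(t)\|_{L^2}$ itself decays is not justified (it would require decay of $\|f\|_{\dot H^1}$, which you do not have); it is also not needed, since the uniform bound $\p_t f\in L^\infty_tL^2_x$ from \eqref{reg:f} suffices for the interpolation to yield exponential decay in $H^{-\eps}$ for every $\eps>0$.
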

\begin{proof}
Thanks to the regularity \eqref{reg:f}, the following calculation is justified:
\bq
\mez\frac{d}{dt}\int_\T f^2(x, t)dx=(\p_t f(t), f(t))_{L^2, L^2}=\langle\p_t f(t), f(t)\rangle_{H^{-\mez}, H^\mez}=-\langle G_f(f), f\rangle_{H^{-\mez}, H^\mez}.
\eq
Applying Proposition \ref{prop:coercive:inf} and the maximum principle \eqref{max:df}, we deduce
\bq\label{dt:L2}
\mez\frac{d}{dt}\| f(t)\|^2_{L^2(\T)}\le -\frac{C}{1+\| \p_x f(t)\|_{L^\infty(\T)}}\|f(t)\|_{\dot H^\mez(\T)}^2\le  -\frac{C}{1+\| \p_x f(0)\|_{L^\infty(\T)}}\|f(t)\|_{\dot H^\mez(\T)}^2
\eq
for a.e. $t>0$.  It follows that
\bq\label{L2Hmez}
f\in L^2([0, \infty); \dot H^\mez(\T)).
\eq
Combining \eqref{L2Hmez} and \eqref{bound:DNinf} yields
\bq\label{L2dtf}
\p_tf=-G_f(f)\in L^2([0, \infty); H^{-\mez}(\T)).
\eq
Assume now that $f_0$ has zero mean, then \eqref{max:f} implies that $f(t)$ has zero mean for all $t>0$. Consequently, $\| f(t)\|_{\dot H^\mez}\ge \| f(t)\|_{L^2}$ and thus \eqref{dt:L2}
yields
\bq
\frac{d}{dt}\| f(t)\|^2_{L^2(\T)}\le -\frac{C}{1+\| \p_x f(0)\|_{L^\infty(\T)}}\|f(t)\|_{L^2(\T)}^2\quad\forall t>0.
\eq
Therefore, the $L^2$ norm of $f$ decays exponentially,
\bq\label{decay:L2}
\| f(t)\|_{L^2}\le \| f_0\|_{L^2}e^{-\frac{Ct}{1+\| \p_x f(0)\|_{L^\infty(\T)}}}\quad\forall t>0.
\eq
Combining \eqref{decay:L2} with the uniform  bounds \eqref{max:f} and \eqref{max:df}, we deduce that $f$ decays exponentially in any norms that interpolate between $L^2(\T)$ and $W^{1, \infty}(\T)$. In particular, all the $H^\alpha(\T)$ and $C^\alpha(\T)$ norms, $\alpha\in [0, 1)$, of $f$ decay exponentially. Next, we recall from \cite{DGN} that 
\bq
\| G_f(f)\|_{H^{\sigma-1}}\le C(1+\| \p_x f\|_{L^\infty})^2\| f\|_{\dot H^\sigma},\quad \sigma\in [\mez, 1]. 
\eq
Therefore, $\p_t f=-G_f(f)$ decays exponentially in $H^{-\eps}(\T)$ for any $\eps>0$. 
\end{proof}

\vspace{.1in}
\noindent{\bf{Acknowledgment.}} 
The work of HQN was partially supported by NSF grant DMS-2205734. The author thanks H. Dong and F. Gancedo for stimulating and helpful discussions. He also thanks the referees for useful suggestions. 
 
\end{document}